\newtheorem{theorem}{Theorem}
\newtheorem{corollary}[theorem]{Corollary}
\newtheorem{proposition}[theorem]{Proposition}
\newtheorem{lemma}[theorem]{Lemma}
\theoremstyle{definition}
\newtheorem{definition}[theorem]{Definition}
\newtheorem{remark}[theorem]{Remark}
\newcommand{\N}{\mathbb{N}}
\newcommand{\Z}{\mathbb{Z}}
\newcommand{\Q}{\mathbb{Q}}
\newcommand{\C}{\mathbb{C}}
\DeclareMathOperator{\Sym}{Sym}
\DeclareMathOperator{\Span}{Span}
\newcommand\blfootnote[1]{%
  \begingroup
  \renewcommand\thefootnote{}\footnote{#1}%
  \addtocounter{footnote}{-1}%
  \endgroup
}
\newcounter{thmlistcnt}
	{\setcounter{thmlistcnt}{0}%
	\begin{list}{\emph{(\roman{thmlistcnt})}}{%
		\usecounter{thmlistcnt}%
		\setlength{\topsep}{0pt}%
		\setlength{\leftmargin}{0pt}%
		\setlength{\itemsep}{0pt}%
		\setlength{\itemindent}{17pt}}%
	}%
	{\end{list}}%
\begin{document}
\title[The Action of Generalised Symmetric Groups]{The Action of Generalised Symmetric Groups on Symmetric and Exterior Powers of Their Natural Representations}
\date{\today}\author{Bill O'Donovan}
\begin{abstract}
We establish upper and lower bounds on the dimension of the space spanned by the symmetric powers of the natural character of generalised symmetric groups. We adapt the methods of Savitt and Stanley from [1] to obtain bounds  both over the complex numbers and in prime characteristic. 
\end{abstract}

\maketitle
\thispagestyle{empty}

\section{Introduction}

In \blfootnote{2010 \textit{Mathematics Subject Classification}. 20C30

\textit{Key words and phrases}. Symmetric power, wreath product, exterior power, generalised symmetric group, Brauer character.}[1], Savitt and Stanley proved that the space spanned by the symmetric powers of the standard $n$-dimensional complex representation of $S_n$ has dimension asymptotic to $\frac{n^2}{2}$. In this paper, we look at some extensions of this, as well as demonstrate the wide applicability of the methods employed in [1] for upper bounds. The methods used in [1] to obtain lower bounds are not so readily applicable, but some partial results can be found. The study of the symmetric powers of the natural representation of $S_n$ is motivated in part by the fact that if $\lambda$ is a partition and $r \geq \sum_{i=1}^{\lambda_1} \binom{\lambda_i'}{2}$, then $\Sym^rV$ contains the Specht module $S^\lambda$.  

In Section 2 of this paper, we consider the action of the generalised symmetric groups groups $C_k \wr S_n$, on the symmetric and exterior powers of their natural representations as $n \times n$ complex matrices. We prove that, to leading order, the space spanned by the characters of the symmetric powers of the natural representation of $C_k \wr S_n$ has dimension between $kn\log n $ and $\frac{kn^2}{2}$. If $\lambda = (\lambda_1, \ldots, \lambda_s)$ is a partition, then we define $f_{\lambda}(q) := \prod_{j=1}^s (1-q^{\lambda_j})^{-1}.$ The identity (where $\sigma \in S_n$ has cycle type $\lambda$) $$ \sum_{r} (\Sym^r \pi) (\sigma) q^r = p_\lambda(1,q,q^2, \ldots ) = f_\lambda(q),$$ plays an important part of the analysis in [1]; in Section 2 we obtain a generalisation of this for wreath products. Furthermore, we show that if $k \geq 2$, then the space spanned by the characters of the exterior powers of this representation has maximum possible dimension, namely $n+1$. In contrast, the space spanned by the exterior powers of the natural character of $S_n$ (the case when $k=1$) has dimension $n$.

In Section 3 of this paper, we examine the same issue explored in [1], but this time in positive characteristic, and prove that the space spanned by the symmetric powers of the natural representation of $S_n$ in characteristic $p$ is bounded (to leading order) by $\frac{p}{2p+2}n^2$. Although the argument for lower bounds given in [1] cannot be modified for this case, we use a different method to establish a lower bound of (again to leading order) $\frac{p-1}{p} n\log n$. Numerical data suggests that this lower bound can be improved upon and that the upper bound is closer to the truth. We conclude by putting both extensions together: we consider the symmetric powers of $C_k \wr S_n$ in characteristic $p$ and prove that the dimension of the space spanned by the symmetric powers is bounded above, to leading order, by $r_p(k)\frac{p}{2p+2}n^2$, where $r_p(k)$ is the $p'$-part of $k$, as defined in Definition 9.

\section{Generalised Symmetric Groups}
\subsection{Symmetric Powers}

We consider an extension of [1], namely to the generalised symmetric group $C_k \wr S_n$ for $k \in \N$; we think of this group as the subgroup of $GL_n(\C)$ consisting of matrices whose entries are $k^{\text{th}}$ roots of unity, and which have exactly one non-zero entry in each row and column. 

\begin{definition}Given $\sigma \in C_k \wr S_n$, we define $|\sigma|$ to be the element of $S_n$ corresponding to the permutation matrix we obtain by setting all of the non-zero entries of $\sigma$ equal to one. We also define a map $\lambda : C_k \wr S_n \rightarrow \text{Par}(n)$ by sending $\sigma \in C_k \wr S_n$ to the cycle type of $|\sigma|$. If $\lambda(\sigma) =(\lambda_1,\ldots,\lambda_s)$, let $t_i \in \C$ denote the product of the entries in $\sigma$ which correspond to cycle $\lambda_i$ of $|\sigma|$. Where $\lambda(\sigma)$ has a repeated part, say $\lambda_i$, to resolve ambiguity, we order the $\lambda_i$-cycles in $|\sigma|$ by the minimum value of the support of the cycle.
 \end{definition}
For example, if $k =2$ and \[\sigma = \left( \begin{array}{ccccc}
0 & -1 & 0 & 0 & 0 \\
-1 & 0 & 0 & 0 & 0 \\
0 & 0 & 1 & 0 & 0 \\
0 & 0 & 0 & 0 & -1 \\
0 & 0 & 0 & 1 & 0 \end{array} \right)\] then $|\sigma| = (12)(3)(45) \in S_5$, $\lambda(\sigma) = (2,2,1)$, $t_1 = 1$, $t_2 = -1$ and $t_2 = 1$.The conjugacy class of $ \sigma \in C_k \wr S_n$ is determined by $\lambda(\sigma) = (\lambda_1,\ldots,\lambda_s)$ and the $k$-tuple $(t_1,\ldots,t_s)$. Also, such pairs $(\lambda,t)$ where $\lambda = (\lambda_1,\ldots,\lambda_s) \in \text{Par}(n)$ and $t=(t_1,\ldots,t_s)$, where each $t_i$ is a $k^{\text{th}}$ root of unity, parametrise the conjugacy classes. For details see [4, Section 4.2].

Let $V$ be the natural representation of $C_k \wr S_n$ with basis $e_1,\ldots,e_n$ and let $\Sym^rV$ be its $r^{th}$ symmetric power with character $\chi_r$. Let $\sigma \in C_k \wr S_n$ and consider the action of $\sigma$ on $\Sym^rV$. Now, $\Sym^rV$ has a basis $\lbrace e^{c_1}_1 \ldots e^{c_n}_n : \sum_i c_i = r \rbrace$, and we want to know when a basis vector contributes to the trace of the matrix of $\sigma$ acting on $\Sym^rV$. Observe that, with this basis, $\sigma$ always maps a basis vector to a multiple of another basis vector; consequently, the only contribution to the trace comes from those basis vectors which are eigenvectors for the action of $\sigma$. 

Suppose, without loss of generality, that $|\sigma| = (1, 2, \ldots ,\lambda_1) \ldots (\lambda_1 + \cdots + \lambda_{s-1}+1, \ldots ,n)$, so $\lambda(\sigma) = (\lambda_1, \ldots ,\lambda_s) $. Then the vector $v = e^{c_1}_1 \ldots e^{c_n}_n$ is an eigenvector for $\sigma$ if and only if $c_1 = \ldots = c_{\lambda_1}$, $c_{\lambda_1+1} = \ldots = c_{\lambda_1+\lambda_2}$ and so on. Hence the eigenvectors for $\sigma$ are in a one-to-one correspondence with the set of non-negative integral solutions $(c_1, \ldots, c_s)$ to $\lambda_1c_1 + \cdots +\lambda_sc_s = r$.

Suppose that $v$ is a basis vector which is an eigenvector for the action of $\sigma$. Then we have $\sigma .v = \prod^{s}_{i = 1} t_i^{c_i} v $, and so $\chi_r (\sigma) = \sum \prod^{s}_{i = 1} t_i^{c_i}$, where the sum runs over all the basis vectors of $\Sym^rV$ which are eigenvectors for the action of $\sigma$. It follows that $$\chi_r (\sigma) = \sum_{(c_1, \ldots, c_s)} \prod^{s}_{i = 1} t_i^{c_i},$$where the sum is over all solutions to the equation $\lambda_1c_1 + \cdots +\lambda_sc_s = r$. Define the power series $$f_\sigma (x) = \frac{1}{(1-t_1x^{\lambda_1}) \ldots (1-t_sx^{\lambda_s})}.$$
The coefficient of $x^m$ in the power series $f_{\sigma}(x)$ is  $$\sum_{(c_1,\ldots,c_s)} \prod^{s}_{i = 1} t_i^{c_i} = \chi_m (\sigma),$$ where the sum is over all solutions to the equation $\lambda_1c_1 + \cdots +\lambda_sc_s = r$. Consequently, this power series is the generating function for the values of $\chi_r (\sigma)$. 

Let $D(n,k)$ be the dimension of the space spanned by the characters of the symmetric powers of the natural representation for $C_k \wr S_n$, i.e. $D(n,k) = \dim (\Span_{\C} \lbrace \chi_r : r \in \N \rbrace)$. Consider the matrix of the characters $\chi_r$ for $r \in \N$; $D(n,k)$ is equal to the row-rank of this matrix. Since row-rank and column-rank are the same, $D(n,k)$ equals the dimension of the space spanned by the columns of the table. By the above argument, the generating function for the entries of the column corresponding to the conjugacy class of $\sigma \in C_k \wr S_n$ is $f_{\sigma}(x)$. We have therefore proved:
   
\begin{proposition}$D(n,k) = \dim (\Span_{\C} \lbrace f_\sigma(x) : \sigma \in C_k \wr S_n \rbrace).$ \end{proposition}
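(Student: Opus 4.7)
The proposition is essentially a packaging of the computation already carried out in the paragraph preceding it, so my plan is to make that packaging explicit and justify the only non-trivial input, namely that row-rank equals column-rank even though we are dealing with an infinite list of characters.

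First, I would fix an enumeration of the conjugacy classes of $C_k \wr S_n$ as $[\sigma_1], \ldots, [\sigma_N]$, where $N$ is the number of such classes. Define the infinite matrix $M$ with rows indexed by $r \in \N$ and columns indexed by $1, \ldots, N$, whose $(r,j)$ entry is $\chi_r(\sigma_j)$. By definition, $D(n,k)$ is the dimension of the $\C$-span of the rows of $M$. Since a character is a class function, no information is lost in replacing $\chi_r$ by its restriction to the conjugacy classes, so the row span of $M$ has dimension equal to $\dim \Span_{\C}\{\chi_r : r \in \N\}$, namely $D(n,k)$.

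Next I would appeal to the equality of row-rank and column-rank. The matrix $M$ has finitely many columns (one per conjugacy class), so its column span lies in the finite-dimensional space $\C^N$ and has well-defined dimension. The row-rank/column-rank equality for a matrix with finitely many columns but infinitely many rows follows from the standard fact that any finite subset of rows generates a space of dimension equal to the rank of the corresponding finite submatrix, combined with the fact that column-rank is the supremum of such ranks. Hence $D(n,k)$ equals the dimension of the column span of $M$.

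Finally, I would identify each column with its generating function. Sending a sequence $(a_r)_{r \in \N} \in \C^{\N}$ to the formal power series $\sum_{r} a_r x^r \in \C[[x]]$ is a $\C$-linear isomorphism onto its image, so it preserves dimensions of spans. The discussion preceding the proposition shows that the column of $M$ corresponding to $[\sigma_j]$ maps under this isomorphism to $f_{\sigma_j}(x)$. Therefore $D(n,k) = \dim \Span_{\C}\{f_{\sigma}(x) : \sigma \in C_k \wr S_n\}$, as claimed. There is no real obstacle here; the only mildly delicate point is the infinite row-rank equals finite column-rank argument, which I would dispatch with the truncation remark above.
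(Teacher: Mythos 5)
Your proposal is correct and follows essentially the same route as the paper: the paper also forms the (infinite-by-finite) matrix of values $\chi_r(\sigma)$, invokes the equality of row-rank and column-rank, and identifies each column with the generating function $f_\sigma(x)$ computed in the preceding discussion. Your extra care in justifying row-rank equals column-rank for a matrix with infinitely many rows is a reasonable elaboration of a point the paper takes for granted, but it does not change the argument.
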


\begin{proposition}We have $D(n,k) \leq \frac{kn^2}{2} +(\frac{k}{2} - 1)n + 1$. \end{proposition}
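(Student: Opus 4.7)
The plan is to use Proposition 1 to identify $D(n,k)$ with the dimension of the span of the rational functions $f_\sigma(x)$, decompose these by partial fractions to bound the dimension by the number of allowed ``partial-fraction slots'', and then save $n-1$ from that bound using the behaviour of $f_\sigma$ at infinity. For $\zeta \in \C^\times$ of multiplicative order $d$, set $e(\zeta) := d/\gcd(d,k)$. The factor $(1-t_i x^{\lambda_i})$ with $t_i \in C_k$ vanishes at $\zeta$ exactly when $t_i = \zeta^{-\lambda_i} \in C_k$, i.e.\ when $e(\zeta) \mid \lambda_i$. Hence $\zeta$ is a pole of some $f_\sigma$ only if $e(\zeta) \leq n$, and its multiplicity is at most $\lfloor n/e(\zeta)\rfloor$ (this bound is attained, e.g.\ by the partition $(e(\zeta),e(\zeta),\ldots)$ with all relevant $t_i = \zeta^{-e(\zeta)}$). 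Consequently $\Span_\C\{f_\sigma\}$ is contained in the subspace $W$ of $\C(x)$ spanned by $\{(1-\zeta x)^{-j} : e(\zeta) \leq n,\ 1 \leq j \leq \lfloor n/e(\zeta)\rfloor\}$.

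Next I would compute $\dim W$. Grouping by $e = e(\zeta)$ gives $\dim W = \sum_{e=1}^n \lfloor n/e\rfloor \cdot |\{\zeta : e(\zeta) = e\}|$. The key technical point, and what I expect to be the main obstacle, is the identity
\[
|\{\zeta : e(\zeta) = e\}| = k\,\phi(e).
\]
To establish this I would parametrise the orders $d$ with $e(d) = e$ by writing $d = eg$ where $g \mid k$ and $\gcd(e, k/g) = 1$; decomposing $k = k_1 k_2$ with $k_1$ the largest divisor of $k$ whose primes all divide $e$ makes the valid $g$ into $k_1 g'$ with $g' \mid k_2$. Since $\gcd(g', ek_1) = 1$, multiplicativity of $\phi$ yields $\phi(eg) = \phi(ek_1)\phi(g') = k_1 \phi(e)\phi(g')$, and $\sum_{g' \mid k_2}\phi(g') = k_2$ then gives $k_1 k_2 \phi(e) = k\phi(e)$. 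Combined with the classical identity $\sum_{e=1}^n \phi(e)\lfloor n/e\rfloor = n(n+1)/2$, this shows $\dim W = kn(n+1)/2$.

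Finally, since $f_\sigma(x) = 1/\prod_i(1-t_ix^{\lambda_i})$ has numerator of degree $0$ and denominator of degree $n$, one has $f_\sigma(x) = O(x^{-n})$ as $x \to \infty$. Expanding each $(1-\zeta x)^{-j}$ as a Laurent series in $x^{-1}$, I find that for any $w = \sum c_{\zeta,j}(1-\zeta x)^{-j}$ in $W$ the coefficient of $x^{-\ell}$ is a fixed linear form in $\{c_{\zeta,j}\}_{j \leq \ell}$ in which each variable $c_{\zeta,\ell}$ appears with nonzero coefficient $(-\zeta)^{-\ell}$. Requiring this coefficient to vanish for $\ell = 1,\ldots,n-1$ imposes $n-1$ linear conditions on $W$, and they are linearly independent since the $\ell$th condition is the first to mention the variable $c_{1,\ell}$, which is an allowed slot for every $\ell \leq n$ because $e(1) = 1$. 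Therefore
\[
D(n,k) \leq \dim W - (n-1) = \tfrac{kn(n+1)}{2} - (n-1) = \tfrac{kn^2}{2} + \bigl(\tfrac{k}{2}-1\bigr)n + 1,
\]
as required.
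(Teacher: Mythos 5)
Your argument is correct and delivers exactly the stated bound, but it reaches it by a different decomposition than the paper. The paper's proof is more economical: it exhibits a single explicit common denominator $D(x) = \prod_{0 \le i < k,\, 1 \le j \le n}(1-\zeta^i x^j)$ for all the $f_\sigma$, observes that each $f_\sigma$ then has numerator of degree at most $\deg D - n$, and so spans the space by $\frac{1}{D},\frac{x}{D},\ldots,\frac{x^{\deg D - n}}{D}$, giving $D(n,k) \le \deg D + 1 - n$ with $\deg D = k\sum_{j=1}^n j = \frac{kn(n+1)}{2}$ computed in one line. You instead bound the space ``locally'', pole by pole, via partial fractions: your multiplicity bound $\lfloor n/e(\zeta)\rfloor$ is precisely the paper's statement that $(x-\theta)$ occurs $\lfloor n/r\rfloor$ times in $D(x)$ (your $e(\zeta)$ is the paper's minimal $r$), your identity $|\{\zeta : e(\zeta)=e\}| = k\phi(e)$ together with $\sum_{e\le n}\phi(e)\lfloor n/e\rfloor = \frac{n(n+1)}{2}$ recovers $\deg D$, and your $n-1$ linearly independent vanishing conditions at infinity are the dual formulation of the numerator-degree restriction. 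The price of your route is the extra number-theoretic bookkeeping (the $k\phi(e)$ count and the totient-sum identity, both of which you justify correctly) and the need to know that the partial-fraction basis of $W$ is genuinely a basis so that the $c_{\zeta,j}$ are coordinates and your $n-1$ functionals are independent; the payoff is a completely explicit description of the pole structure, which the paper only needs implicitly. One cosmetic point: you define $e(\zeta)$ with $\zeta$ playing the role of a pole of $f_\sigma$, but then span $W$ by $(1-\zeta x)^{-j}$, whose pole is at $\zeta^{-1}$; since $e(\zeta) = e(\zeta^{-1})$ this is harmless, but you should fix one convention.
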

\begin{proof}
Let $\zeta = \exp(\frac{2\pi i}{k})$ and consider $\lbrace f_\sigma(x) : \sigma \in C_k \wr S_n \rbrace$; we claim that a common denominator for these power series is $$ D(x) := \prod_{\substack{
      0 \le i < k \\
      1 \le j \le n}} (1-\zeta^ix^j).$$
      
Suppose that $\theta \in \C$ and $x-\theta$ divides $\frac{1}{f_\sigma(x)}$ in $\C[x]$, for some $\sigma \in C_k \wr S_n$. Let $r$ be the minimal natural number such that, in $\C[x]$, $x-\theta$ divides $1-\zeta^ix^r$ for some $0 \leq i < k$. Then, in a factorisation over $\C$ into linear factors of a common denominator for the $f_\sigma(x)$, the factor $x-\theta$ appears exactly $\lfloor \frac{n}{r} \rfloor$ times by minimality of $r$. Since $\zeta^i\theta^r = 1$, it follows that for any $m \in \N$, $\zeta^{im}\theta^{rm} = 1$, whence $x-\theta$ divides $1-\zeta^{im}x^{rm}$ in $\C[x]$. This is a factor of $D(x)$ for $m=1,\ldots,\lfloor \frac{n}{r} \rfloor$, so $(x-\theta)^{\lfloor \frac{n}{r} \rfloor}$ divides $D(x)$ in $\C[x]$, and this establishes the claim.
 
The elements $\frac{1}{D(x)},\frac{x}{D(x)},\ldots,\frac{x^{\text{deg}D-n}}{D(x)}$ form a spanning set for the vector space $\Span_{\C} \lbrace f_\sigma(x) : \sigma \in C_k \wr S_n \rbrace$, giving $D(n,k) \leq \text{deg } D + 1 -n$. Note that $D(x)$ has degree $k \sum_{j=1}^n j = \frac{kn}{2}(n+1)$, and the result follows.
\end{proof}

Although the method used in [1, Section 3] to obtain a lower bound is not adaptable to this case (see also Remark 13), we can establish a lower bound by other methods. 

\begin{proposition}There exists a constant $c>0$ such that $D(n,k) \geq \max (\frac{n^2}{2} -cn^{\frac{3}{2}}, k((n-1)\log (n-1) -2(n-2))).$  \end{proposition}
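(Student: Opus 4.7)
The plan is to prove the two lower bounds in the maximum separately, since each is established by a different method, and then the $\max$ is immediate.

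For the first bound $\tfrac{n^2}{2} - cn^{3/2}$, I would observe that $S_n$ embeds in $C_k \wr S_n$ as the subgroup of permutation matrices (so that every marker $t_i$ equals $1$). For $\sigma \in S_n$ with cycle type $(\lambda_1, \ldots, \lambda_s)$, the formula for $f_\sigma$ specialises to $\prod_i(1-x^{\lambda_i})^{-1}$, which is precisely the generating function from [1]. Hence $\Span_{\C}\{f_\sigma : \sigma \in S_n\} \subseteq \Span_{\C}\{f_\sigma : \sigma \in C_k \wr S_n\}$, so $D(n,k) \geq D(n,1)$, and Savitt--Stanley's lower bound $D(n,1) \geq \tfrac{n^2}{2} - cn^{3/2}$ from [1] yields the first estimate (with $c$ being their constant).

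For the second bound $k((n-1)\log(n-1) - 2(n-2))$, I would exhibit an explicit family of conjugacy classes and show that their generating functions are linearly independent. For each triple $(\lambda, j, \zeta)$ with $\lambda \in \{2, \ldots, n-1\}$, $j \in \{1, \ldots, \lfloor(n-1)/\lambda\rfloor\}$, and $\zeta$ a $k$-th root of unity, let $\sigma_{\lambda, j, \zeta} \in C_k \wr S_n$ consist of $j$ cycles of length $\lambda$ each marked with $\zeta$, together with $n - j\lambda \geq 1$ fixed points each marked with $1$. The formula for $f_\sigma$ gives
\[
f_{\sigma_{\lambda, j, \zeta}}(x) = \frac{1}{(1-\zeta x^\lambda)^j(1-x)^{n-j\lambda}}.
\]
Distinct triples yield distinct conjugacy classes. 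Using $\lfloor(n-1)/\lambda\rfloor \geq (n-1)/\lambda - 1$ and the inequality $H_{n-1} - \log(n-1) \geq \gamma$, the count $k\sum_{\lambda=2}^{n-1}\lfloor(n-1)/\lambda\rfloor$ exceeds $k((n-1)\log(n-1) - 2(n-2))$ for $n \geq 3$, with the bound being trivial for smaller $n$.

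The main obstacle is to verify that this family of rational functions is linearly independent. My proposed approach is a pole-order analysis. The function $f_{\sigma_{\lambda, j, \zeta}}$ has a pole of order $n - j\lambda$ at $x = 1$ when $\zeta \neq 1$ (and of order $n - j\lambda + j$ when $\zeta = 1$), together with poles of order $j$ at each $\lambda$-th root of $\zeta^{-1}$. Filtering a hypothetical linear dependence by the order of its pole at $x = 1$ isolates subfamilies with prescribed $(n - j\lambda)$, and within a fixed $(\lambda, j)$-level the $k$ functions parametrised by $\zeta$ reduce, via the substitution $y = x^\lambda$, to the classical linear independence of $\{(1-\zeta y)^{-j}\}_{\zeta^k = 1}$. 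The delicate point is the potential coincidence of pole locations across distinct values of $\lambda$: when $\lambda \mid \lambda'$, a $\lambda$-th root of $\zeta^{-1}$ can also be a $\lambda'$-th root of some $(\zeta')^{-1}$. I would resolve this by an inner induction on $\lambda$ in decreasing order, at each step stripping off the contributions with smallest $\lambda$ by extracting residues at primitive $\lambda k$-th roots of unity, which, among the family, can only be poles of terms indexed by multiples of $\lambda$, allowing a clean iterative elimination.
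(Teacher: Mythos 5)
Your first bound is exactly the paper's argument: restricting to permutation matrices (all markers $t_i=1$) gives $\lbrace f_\lambda : \lambda \in \text{Par}(n)\rbrace \subseteq \lbrace f_\sigma : \sigma \in C_k\wr S_n\rbrace$, and [1, Proposition 3.3] yields $D(n,k)\ge \frac{n^2}{2}-cn^{3/2}$. For the second bound your family is genuinely different from the paper's, and this is where the gap lies. The paper takes elements with a \emph{single} marked fixed point and an otherwise unmarked cycle structure $(a^b,1^{n-1-ab})$ on the remaining $n-1$ points, i.e.\ the functions $\frac{1}{1-\zeta^r x}f_\lambda(x)$ with $\lambda \in W_{n-1}$; the marker's pole sits at a $k$-th root of unity, the unmarked part contributes a pole of order exactly $b$ at $e^{2\pi i/a}$, and a lexicographic induction on $\lambda$ peels these off. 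You instead place the marker $\zeta$ on all $j$ of the $\lambda$-cycles, so the pole locations of $(1-\zeta x^\lambda)^{-j}$ depend jointly on $\lambda$ and $\zeta$, and your independence argument does not close as described.

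Concretely: (i) the substitution $y=x^\lambda$ cannot be used to invoke independence of $\lbrace(1-\zeta y)^{-j}\rbrace$, because the factor $(1-x)^{n-j\lambda}$ is not a function of $x^\lambda$; (ii) if $\zeta$ is an \emph{imprimitive} $k$-th root of unity (e.g.\ $\zeta=1$ with $k\ge 2$), then $1-\zeta x^\lambda$ has no root at any primitive $(\lambda k)$-th root of unity, so extracting residues there says nothing about those coefficients; and (iii) the poles such terms do have can coincide with poles of terms of strictly smaller $\lambda'$: for $k=4$ the primitive $12$th root of unity $\omega=e^{i\pi/6}$ satisfies $\omega^6=-1$ and $\omega^3=i$, so it is simultaneously a pole of $(1+x^6)^{-j}$ (marker $-1$, $\lambda=6$) and of $(1+ix^3)^{-j'}$ (marker $-i$, $\lambda'=3$), and your decreasing-in-$\lambda$ elimination no longer isolates one stratum at a time. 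Moreover the pole order at $x=1$, which you use as the first filter, equals $n-j\lambda$ or $n-j(\lambda-1)$ according as $\zeta\ne 1$ or $\zeta=1$, so that filter also mixes strata. The family may well still be linearly independent, but a correct proof would need a careful simultaneous induction on $\lambda$, on the order of $\zeta$, and on $j$; the cleanest fix is to decouple the marker from the long cycles as the paper does. Your counting step, granted independence, is fine: $k\sum_{\lambda=2}^{n-1}\lfloor (n-1)/\lambda\rfloor \ge k((n-1)\log(n-1)-2(n-2))$ for $n\ge 3$ via $H_m\ge \log m+\gamma$.
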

\begin{proof}
Since $\lbrace f_\lambda(x) : \lambda \in \text{Par}(n) \rbrace $ is a subset of $\lbrace f_\sigma(x) : \sigma \in C_k \wr S_n \rbrace$, we get the lower bound $D(n,k) \geq \frac{n^2}{2} -cn^{\frac{3}{2}}$ for some constant $c$ by [1, Proposition 3.3]. Let $W_{n-1}$ denote the set of partitions of $n-1$ which have the form $(a^b,1^{n-1-ab})$, where $a,b \in \N_0$ and $a > 1$, and let $\zeta = \exp(\frac{2\pi i}{k})$. We claim that $Y := \lbrace \frac{1}{1-\zeta^rx} f_\lambda (x) : 0 \leq r < k, \lambda \in W_{n-1} \rbrace$ is a linearly independent set. We prove this by induction on the position of the partition $\lambda$ in the lexicographic ordering on Par($n-1$).

Suppose that for some constants $\alpha_{\lambda,r}$, we have $$\sum_{\lambda \in W_{n-1}} \sum_{r=0}^{k-1} \alpha_{\lambda,r} \frac{f_\lambda(x)}{1-\zeta^rx} = 0.$$ Let $\mu_1$ be the partition $(n-1) \in W_{n-1}$, then we can re-write this as $$-\sum_{r=0}^{k-1}  \frac{\alpha_{\mu_1,r}}{(1-\zeta^rx)(1-x^{n-1})} = \sum_{\lambda \in W_{n-1} \setminus \lbrace \mu_1 \rbrace} \sum_{r=0}^{k-1} \alpha_{\lambda,r} \frac{f_\lambda(x)}{1-\zeta^rx}.$$
The left-hand side has a singularity at $\exp(\frac{2\pi i}{{n-1}})$, whereas the right-hand side is holomorphic at this point. It follows that $\sum_{r=0}^{k-1} \frac{\alpha_{\mu_1,r}}{1-\zeta^rx} = 0.$ Since the function $\frac{1}{1-\zeta^rx}$ has a pole at $\zeta^{-r}$ only, we see that the power series $\frac{1}{1-\zeta^rx}$ for $ 0 \leq r < k$ are linearly independent. Hence $\alpha_{\mu_1,r} = 0$ for all $r$.

Let $\mu = (a^b,1^{n-ab-1})$ and suppose that $\alpha_{\lambda,r} = 0$ for every partition $\lambda \in W_{n-1}$ which is greater than $\mu$ in the lexicographic ordering. Then we can write

$$-\sum_{r=0}^{k-1}  \frac{\alpha_{\mu,r}}{(1-\zeta^rx)(1-x^{a})^b(1-x)^{n-ab-1}} = \sum_{\lambda < \mu} \sum_{r=0}^{k-1} \alpha_{\lambda,r} \frac{f_\lambda(x)}{1-\zeta^rx}.$$

The left-hand side has a pole of order $b$ at $\exp(\frac{2\pi i}{{a}})$. There are two cases. If the right-hand side is holomorphic at $\exp(\frac{2\pi i}{{a}})$, then we deduce that $\alpha_{\mu,r} = 0$ by the same argument given in the base case, and so we may suppose that the right-hand side has a pole at $\exp(\frac{2\pi i}{{a}})$. However, if the pole at $\exp(\frac{2\pi i}{{a}})$ on right-hand side were of order $b$ or greater, then there would be a partition $\lambda \in W_{n-1}$ with the part $a$ repeated at least $b$ times. By construction of the set $W_{n-1}$, this is impossible since $\lambda < \mu$.

Hence the right-hand side has a pole at $\exp(\frac{2\pi i}{{a}})$ of order strictly less than $b$. Consequently, we deduce that $\alpha_{\mu,r} = 0$. It follows by induction that all the $\alpha_{\lambda,r}$ are equal to zero. This proves the claim.

Therefore, we have $D(n,k) \geq k|W_{n-1}|$. It is clear that $$|W_m| = 1 + \sum_{t=2}^m \lfloor \frac{m}{t} \rfloor \geq 1 + \sum_{t=2}^m (\frac{m}{t} - 1).$$ Therefore, we have $|W_m| \geq 1 + mH_m - m - (m-1)$, where $H_m$ is the $m^{\text{th}}$ harmonic number, and so, using the simple bound $H_m \geq \log m$, it follows that $|W_m| \geq m\log m - 2(m-1)$. Consequently, we deduce that $D(n,k) \geq k((n-1)\log (n-1) -2(n-2))$, as required.
\end{proof}

\begin{remark}In the above proof we used that the power series $\frac{1}{1-\zeta^rx}$ for $r=1,\ldots,k$ are linearly independent, whence $D(1,k) = k$. In particular, the upper bound $D(n,k) \leq \frac{kn^2}{2} +(\frac{k}{2} - 1)n + 1$ is an equality when $n=1$. 
 \end{remark}

\subsection{Exterior Powers}

Let $\Lambda^rV$ be the $r^{\text{th}}$ exterior power of the natural representation of $C_k \wr S_n$ with basis $\lbrace e_{i_1} \wedge \ldots \wedge e_{i_k} : 1 \leq i_1 < \ldots < i_k \leq n \rbrace$ and character $\psi_r$.

Note that with this basis of $\Lambda^rV$, each $\sigma \in C_k \wr S_n$ again sends a basis vector to a multiple of a basis vector, so the only contribution to the trace of the matrix of $\sigma$ on $\Lambda^rV$ is from those basis vectors which are eigenvectors for the action of $\sigma$. Let a basis vector be $v = e_1^{c_1} \wedge e_2^{c_2} \wedge \ldots \wedge e_n^{c_n}$, where $c_i \in \lbrace 0,1 \rbrace$ and we interpret $e_i^0$ to mean that $e_i$ does not appear (for example, $e_1^1 \wedge e_2^0 \wedge e_3^1 = e_1 \wedge e_3$). Without loss of generality, suppose that $|\sigma| = (1,\ldots, \lambda_1)\ldots(\lambda_1+\cdots+\lambda_{s-1}+1,\ldots,n)$, so $\lambda(\sigma) = (\lambda_1, \ldots, \lambda_s)$.

Observe that, for every $j$ such that $c_j =1$, we must have $c_k = 1$ whenever $k$ is in the same cycle as $j$ in $|\sigma|$. Therefore, the eigenvectors correspond to solutions to the equation $\lambda_1c_1 + \cdots +\lambda_sc_s = r$ where each $c_i$ is either 0 or 1. Moreover, a $m$-cycle in $S_n$ with $c_i = 1$ on its support sends $v$ to $(-1)^{m-1}v$ since an $m$-cycle has sign $(-1)^{m-1}$.

Then $\sigma.v = \prod_{\text{$i$}}  (-1)^{\lambda_i -1} t_i,$ where the product is over $ 1\leq i \leq s$ such that $c_i = 1$ and $t_i$ is as in Definition 1. It follows that $$\psi_r(\sigma) = \sum_{(c_1, \ldots c_s)}\prod_{\text{$i $}}  (-1)^{\lambda_i -1} t_i,$$ where the sum is over solutions $(c_1, \ldots, c_s)$ to the equation $\lambda_1c_1 + \cdots +\lambda_sc_s = r$ and the product is again over $ 1\leq i \leq s$ such that $c_i = 1$.

Given $\sigma \in C_k \wr S_n$ with $\lambda(\sigma) = (\lambda_1,\ldots,\lambda_s)$ and associated $s$-tuple of $k^{\text{th}}$ roots of unity $(t_1,\ldots,t_s)$, we define $$g_\sigma(x) = \prod_{i = 1}^s (1-t_i(-1)^{\lambda_i -1}x^{\lambda_i}).$$

A similar argument to that used to prove Proposition 2 shows that $g_\sigma(x)$ is the generating function for the values of $\psi_r(\sigma)$. Let $E(n,k)$ be the dimension of the space spanned by the characters of the exterior powers of the natural representation for $C_k \wr S_n$. Then we have proved:

\begin{proposition}$E(n,k) = \dim (\Span_{\C} \lbrace g_\sigma(x) : \sigma \in C_k \wr S_n \rbrace).$ \end{proposition}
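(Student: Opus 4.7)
The plan is to transcribe the argument of Proposition 2 with $g_\sigma(x)$ playing the role of $f_\sigma(x)$ and $\psi_r$ in place of $\chi_r$. The first task is to confirm the generating function identity
$$g_\sigma(x) = \sum_{r \geq 0} \psi_r(\sigma)\, x^r.$$
I would expand the product $\prod_{i=1}^s (1 - t_i(-1)^{\lambda_i-1}x^{\lambda_i})$ by choosing in each factor either $1$ or the $x^{\lambda_i}$ contribution, and record these choices as a tuple $(c_1,\ldots,c_s) \in \{0,1\}^s$. The coefficient of $x^r$ in $g_\sigma(x)$ then becomes a signed sum of products $\prod_{i: c_i=1} t_i$ over tuples satisfying $\lambda_1c_1 + \cdots + \lambda_s c_s = r$, which matches the displayed formula for $\psi_r(\sigma)$ derived above from the eigenvalue analysis. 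The key sign datum entering is that an $\ell$-cycle acts on the top exterior power of its support as $(-1)^{\ell-1}$, which is precisely what each factor of $g_\sigma(x)$ records.

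With the generating function identity in hand, the argument of Proposition 2 applies almost verbatim. I would form the matrix $M$ with rows indexed by $r \in \N$ and columns indexed by a set of conjugacy class representatives $\sigma$ of $C_k \wr S_n$, with entry $\psi_r(\sigma)$ in position $(r,\sigma)$. Since $\psi_r$ is a class function, $E(n,k)$ equals the row rank of $M$, which by equality of row and column ranks equals the column rank. Under the $\C$-linear isomorphism that sends a (finitely supported) column $(\psi_0(\sigma),\psi_1(\sigma),\ldots)$ to the polynomial $\sum_r \psi_r(\sigma)x^r = g_\sigma(x)$, the column space of $M$ corresponds bijectively to $\Span_{\C}\{g_\sigma(x) : \sigma \in C_k \wr S_n\}$, yielding the claimed equality.

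I expect no substantive obstacle. The only point worth care is the sign bookkeeping in the first step: verifying that the expansion of $g_\sigma(x)$, with its $(-1)^{\lambda_i-1}$ factors, reproduces term-for-term the contributions $\prod_{i:c_i=1}(-1)^{\lambda_i-1}t_i$ appearing in the formula for $\psi_r(\sigma)$. Once that identity is pinned down, the row-rank equals column-rank argument is purely formal, and essentially identical to the one already carried out for $D(n,k)$.
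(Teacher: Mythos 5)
Your proposal follows the paper's own (sketched) argument exactly: the eigenvector analysis preceding the proposition gives the formula for $\psi_r(\sigma)$, the product expansion identifies the generating function, and the row-rank-equals-column-rank argument of Proposition 2 transfers verbatim. On the sign bookkeeping you rightly flag: expanding the product as printed contributes $-t_i(-1)^{\lambda_i-1} = t_i(-1)^{\lambda_i}$ for each $i$ with $c_i=1$, so to match the displayed formula for $\psi_r(\sigma)$ the factors of $g_\sigma$ should read $1+t_i(-1)^{\lambda_i-1}x^{\lambda_i}$ --- a sign typo in the paper's definition rather than a gap in your argument.
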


From this, we are able to deduce the following:

\begin{corollary}For any k $\geq 2$, $E(n,k) = n+1$.\end{corollary}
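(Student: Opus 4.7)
The plan is to establish matching upper and lower bounds of $n+1$. The upper bound is essentially free: since $V$ is $n$-dimensional, $\Lambda^r V = 0$ for $r > n$, so there are only $n+1$ possibly nonzero exterior powers $\Lambda^0 V, \ldots, \Lambda^n V$, and hence at most $n+1$ distinct characters $\psi_r$. This gives $E(n,k) \leq n+1$ immediately.

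For the matching lower bound, by Proposition 5 it suffices to exhibit $n+1$ of the polynomials $g_\sigma(x)$ which are linearly independent over $\C$. The idea is to exploit the hypothesis $k \geq 2$ by fixing a primitive $k$-th root of unity $\zeta \neq 1$, and, for each $j \in \{0,1,\ldots,n\}$, taking $\sigma_j \in C_k \wr S_n$ to be the diagonal matrix whose first $j$ diagonal entries are $\zeta$ and whose remaining $n-j$ entries are $1$. Then $|\sigma_j|$ is the identity, so $\lambda(\sigma_j) = (1^n)$ and each sign $(-1)^{\lambda_i - 1}$ equals $1$; the formula for $g_\sigma(x)$ therefore collapses to
$$g_{\sigma_j}(x) = (1 - \zeta x)^j (1 - x)^{n-j}.$$

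What remains is to verify that the $n+1$ polynomials $p_j(x) := (1-\zeta x)^j (1-x)^{n-j}$, $0 \leq j \leq n$, are linearly independent over $\C$. My preferred route is a single-variable substitution: a relation $\sum_{j=0}^n a_j p_j(x) = 0$ divided by $(1-x)^n$ in $\C(x)$ becomes $\sum_{j=0}^n a_j y^j = 0$, where $y := (1-\zeta x)/(1-x)$ is a non-constant rational function and so takes infinitely many values, forcing $a_0 = \cdots = a_n = 0$. A self-contained alternative is induction on $n$: evaluating at $x=1$ kills all terms except the one with $j = n$, giving $a_n(1-\zeta)^n = 0$ and hence $a_n = 0$, after which one cancels a factor of $(1-x)$ from the remaining sum and applies the inductive hypothesis.

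There is no serious obstacle; the content of the corollary is really just the observation that the freedom to choose a nontrivial $k$-th root of unity when $k \geq 2$ lets us produce $n+1$ of the $g_\sigma(x)$ in an especially clean one-parameter form for which linear independence is routine.
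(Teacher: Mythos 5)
Your proof is correct, and it takes a genuinely different route from the paper's on the substantive half of the argument (the lower bound). The paper also reduces to exhibiting $n+1$ linearly independent polynomials of the form $g_\sigma(x)$, but it chooses witnesses with cycle types $(n-j,j)$ and $(n)$, namely $a_j(x)=(1+(-1)^{n-j}x^{n-j})(1+(-1)^{j}\zeta x^{j})$ and $b_j(x)=(1+(-1)^{n-j}x^{n-j})(1+(-1)^{j}\zeta^{2}x^{j})$ together with $1-\zeta x^{n}$, splits into cases according to the parity of $n$, and proves independence by equating coefficients of $x^{j}$ and $x^{n-j}$ pair by pair. You instead take all witnesses diagonal, with $\lambda(\sigma_j)=(1^n)$, so that the $g_{\sigma_j}$ collapse to the one-parameter family $(1-\zeta x)^{j}(1-x)^{n-j}$; independence of powers of two non-proportional linear forms is then immediate either by your substitution $y=(1-\zeta x)/(1-x)$ or by evaluation at $x=1$ and induction. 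This avoids the parity case split entirely and isolates exactly where $k\geq 2$ is used (the existence of $\zeta\neq 1$, which makes the two linear forms non-proportional). You also make explicit the upper bound $E(n,k)\leq n+1$ (from $\Lambda^{r}V=0$ for $r>n$), which the paper leaves implicit. One cosmetic remark: the paper's displayed formula for $g_\sigma(x)$ carries a sign that makes the $\lambda_i=1$ factors read $1-t_i x$ rather than $1+t_i x$; this discrepancy is absorbed by the substitution $x\mapsto -x$ and affects neither the span's dimension nor your argument, which goes through verbatim with either sign convention.
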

\begin{proof}Let $\zeta = \exp(\frac{2\pi i}{k})$. We consider two cases; first suppose that $n$ is even. Then for $j=1, \ldots, \frac{n}{2}$, we define the following $n+1$ polynomials:  $a_j(x) = (1+(-1)^{n-j}x^{n-j})(1+(-1)^{j}\zeta x^j)$, $b_j(x) = (1+(-1)^{n-j}x^{n-j})(1+(-1)^{j}\zeta^2 x^j)$ together with $1-\zeta x^n$, giving $n+1$ polynomials which have the form $g_{\sigma}(x)$ for some $\sigma \in C_k \wr S_n$. We claim these polynomials are linearly independent.

Suppose that for some constants $\alpha_j, \beta_j, \gamma$, we have $$\sum_{j=1}^{\frac{n}{2}} \alpha_j a_j(x) + \sum_{j=1}^{\frac{n}{2}} \beta_j b_j(x) + \gamma(1-\zeta x^n) = 0.$$ Equating coefficients of $x$ and $x^{n-1}$ gives us (after simplifying) the equations $\alpha_1\zeta + \beta_1\zeta^2 = 0$ and $\alpha_1 + \beta_1 = 0$, from which we deduce that $\alpha_1 = \beta_1 = 0$. We similarly deduce $\alpha_j = \beta_j = 0$ for $1 \leq j < \frac{n}{2}$ by equating coefficients of $x^j$ and $x^{n-j}$. This leaves three equations in $\alpha_{\frac{n}{2}}, \beta_{\frac{n}{2}}, \gamma$, from which it is trivial to show that these constants are all zero, thus proving the claim.

If, however, $n$ is odd, then we consider the polynomials $a_j(x)$, $b_j(x)$ for $j=0, 1, \ldots, \lfloor \frac{n}{2} \rfloor$, together with $1-\zeta x^n$ and $1- \zeta^2 x^n$. Again this gives $n+1$ polynomials which are of the form $g_{\sigma}(x)$, and a similar argument to that given for the previous case establishes that they are linearly independent.
\end{proof}

\section{Symmetric groups in prime characteristic}
\subsection{Symmetric group}
The arguments used in [1] worked over the complex numbers, and so it is natural to ask what happens in prime characteristic. To obtain a good theory of characters in this case, we need to work with Brauer characters. Let $\beta_{r,p}$ denote the Brauer character of the $r^{\text{th}}$ symmetric power of the natural representation of $S_n$ in characteristic $p$. 

For a full account of Brauer characters see [2, Chapter 15]; all we need to know about Brauer characters for our purposes is the following:

\begin{lemma} If $g \in S_n$ has order coprime to $p$, then $\beta_{r,p} (g) = \chi_r (g)$. \end{lemma}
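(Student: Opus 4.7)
The plan is to work with an integral model of the symmetric power and compare eigenvalues of $g$ in characteristic $0$ and characteristic $p$. The natural representation $V$ of $S_n$ is defined over $\Z$ with basis $e_1,\ldots,e_n$, and $\Sym^rV$ inherits the integral basis $\{e_1^{c_1}\cdots e_n^{c_n} : \sum_i c_i = r\}$ on which $g$ acts by a permutation matrix. This matrix has the same $\{0,1\}$ entries whether we work over $\C$ or over $\overline{\mathbb{F}_p}$, so the characteristic polynomial of $g$ on $\Sym^rV$ is a monic polynomial in $\Z[x]$ whose reduction modulo $p$ is the characteristic polynomial of $g$ acting on the corresponding symmetric power in characteristic $p$.

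Next I would compare eigenvalues in the two characteristics. Write $m = |g|$, which is coprime to $p$ by hypothesis, and let $\omega_1,\ldots,\omega_d$ be the eigenvalues (with multiplicity) of $g$ on $\Sym^rV_{\C}$; each $\omega_i$ is an $m$-th root of unity. Since $x^m - 1$ remains separable modulo $p$, the reduction map from $m$-th roots of unity in the ring of integers of a suitable extension of $\Q_p$ to $m$-th roots of unity in $\overline{\mathbb{F}_p}$ is a bijection, and this is precisely the bijection whose inverse is the lift used to define Brauer characters. Because the characteristic polynomial of $g$ on $\Sym^rV$ reduces mod $p$, the eigenvalues (with multiplicity) of $g$ on $\Sym^rV$ in characteristic $p$ are exactly $\overline{\omega}_1,\ldots,\overline{\omega}_d$.

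Finally, by the definition of the Brauer character recorded in [2, Chapter 15], $\beta_{r,p}(g)$ is the sum of the complex lifts of these characteristic-$p$ eigenvalues; applying the lift recovers $\sum_i \omega_i$, which is $\chi_r(g)$ by definition of the ordinary character. The only delicate point is checking that the lifting convention for Brauer characters is the inverse of the reduction map on $m$-th roots of unity, but this is built into the definition of the Brauer character lift and presents no real obstacle; the rest of the argument is simply bookkeeping with integral bases.
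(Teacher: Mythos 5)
Your argument is correct, but it takes a genuinely different (and more self-contained) route than the paper, which simply observes that the lemma is a special case of the general fact that the Brauer character of the $p$-modular reduction of a characteristic-zero representation agrees with the ordinary character on $p$-regular classes, and cites [2, Theorem 15.8] for the proof. What you have done is essentially unpack the proof of that cited theorem in this particular case, where it simplifies considerably: because $g$ permutes the monomial basis of $\Sym^rV$, the representation has an obvious $\Z$-form, $\Sym^r$ commutes with base change for free modules (so the characteristic-$p$ symmetric power really is the reduction of the integral one), and the characteristic polynomial of $g$ on $\Sym^rV$ therefore reduces mod $p$ to the characteristic polynomial in characteristic $p$. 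Since $|g|=m$ is coprime to $p$, the eigenvalues are $m$-th roots of unity, $x^m-1$ is separable mod $p$, and reduction is a bijection on $m$-th roots of unity whose inverse is (by the standard convention) the lift defining the Brauer character; summing the lifted eigenvalues then gives $\chi_r(g)$. The paper's approach buys brevity and generality at the cost of a black-box citation; yours makes the mechanism visible and exploits the permutation structure to avoid the general machinery of choosing an $R$-form over a ring of algebraic integers. The one point you flag as delicate --- that the Brauer lift is the inverse of the reduction map on $p'$-roots of unity --- is indeed built into the definition, so there is no gap.
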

\begin{proof}
This is a special case of the fact that the Brauer character of the $p$-modular reduction of a characteristic zero representation affording $\chi$ equals $\chi$ on the conjugacy classes of $p$-regular elements. For a proof, see [2, Theorem 15.8].
\end{proof}

Let $B(p,n)$ denote the dimension of the space spanned by the Brauer characters of the symmetric powers of the natural representation of $S_n$ in characteristic $p$. An element $g \in S_n$ has order coprime to $p$ if and only if its cycle type has all its parts coprime to $p$. Let $X_p$ denote the set of partitions of $n$ whose parts are all coprime to $p$. The usual argument shows that $\dim (\text{Span$_{\C}$}\lbrace f_\lambda : \lambda \in X_p \rbrace) = B(p,n)$.

We aim to put the rational functions $f_{\lambda}(x)$ (for $\lambda \in X_p$) over their least common denominator, which we denote by $g_p(x)$. If $g_p(x)$ has degree $\delta_p$, then the elements $\frac{1}{g_p(x)},\frac{x}{g_p(x)},\ldots,\frac{x^{\delta_p-n}}{g_p(x)}$ are a spanning set for $\text{Span$_{\C}$}\lbrace f_\lambda : \lambda \in X_p \rbrace$, giving $B(p,n) \leq \delta_p + 1 -n$. Our strategy is as follows: to work out what $g_p(x)$ is (up to a scalar), and then to bound its degree. 

\begin{definition}Let $k \in \N$, and write $k = p^am$ where $m$ is coprime to $p$. Then we define $r_p(k) = m$.\end{definition} 

\begin{proposition}We have $g_p(x) = \pm \prod_{k \leq n} (1-x^{r_p(k)})$. \end{proposition}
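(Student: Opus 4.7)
The plan is to put $g_p(x)$ in explicit cyclotomic form and then check that the proposed product has the same factorisation. Using the identity $1-x^m = -\prod_{d\mid m}\Phi_d(x)$ (with $\Phi_d$ the $d$th cyclotomic polynomial), the denominator of $f_\lambda$ factors as
\[
\prod_{j=1}^s (1-x^{\lambda_j}) \;=\; (-1)^s \prod_d \Phi_d(x)^{N_d(\lambda)}, \qquad N_d(\lambda) := \#\{j : d \mid \lambda_j\}.
\]
Since the numerator of $f_\lambda$ is $1$, no cancellation occurs, and $g_p(x)$ (being the least common multiple of these denominators as $\lambda$ ranges over $X_p$) equals $\pm \prod_d \Phi_d(x)^{M_d}$, where $M_d := \max_{\lambda \in X_p} N_d(\lambda)$.

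Next I would compute $M_d$ for each $d$. If $\gcd(d,p) > 1$ or $d > n$, then no part of any $\lambda \in X_p$ can be divisible by $d$ (parts are coprime to $p$ and lie in $\{1,\dots,n\}$), so $M_d = 0$. If $d \leq n$ and $\gcd(d,p) = 1$, then the partition $(d^{\lfloor n/d \rfloor},\, 1^{n - d \lfloor n/d \rfloor})$ lies in $X_p$ (both $d$ and $1$ are coprime to $p$) and realises $N_d = \lfloor n/d \rfloor$; conversely, parts of $\lambda$ divisible by $d$ each contribute at least $d$ to the total $n$, giving $N_d(\lambda) \leq \lfloor n/d \rfloor$ for every $\lambda$. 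Hence $M_d = \lfloor n/d \rfloor$ in this case.

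Finally, I would apply the same cyclotomic expansion to the proposed product:
\[
\prod_{k \leq n}(1-x^{r_p(k)}) \;=\; (-1)^n \prod_d \Phi_d(x)^{L_d}, \qquad L_d := \#\{k \leq n : d \mid r_p(k)\}.
\]
Because $r_p(k)$ is always coprime to $p$, $L_d = 0$ whenever $\gcd(d,p) > 1$. The key observation for $d$ coprime to $p$ is the equivalence $d \mid r_p(k) \iff d \mid k$: writing $k = p^{v_p(k)} r_p(k)$, divisibility by $d$ transfers between $k$ and $r_p(k)$ since $\gcd(d,p)=1$. Therefore $L_d = \lfloor n/d \rfloor$ for $d \leq n$ coprime to $p$, matching $M_d$ exactly. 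The sign ambiguity in the statement absorbs the discrepancy between $(-1)^n$ and the sign coming from the LCM.

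The proof is essentially bookkeeping with cyclotomic polynomials; the only two points requiring care are the exhibition of the extremal partition in $X_p$ realising $M_d = \lfloor n/d \rfloor$, and the clean equivalence $d \mid r_p(k) \iff d \mid k$ for $d$ coprime to $p$. Neither presents a genuine obstacle.
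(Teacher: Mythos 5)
Your proof is correct and follows essentially the same route as the paper: both arguments compare the multiplicity of each cyclotomic polynomial $\Phi_d(x)$ in the least common denominator with its multiplicity in $\prod_{k\leq n}(1-x^{r_p(k)})$ and check they agree. If anything, your write-up is more careful than the paper's, since you explicitly exhibit the partition $(d^{\lfloor n/d\rfloor},1^{n-d\lfloor n/d\rfloor})\in X_p$ attaining the exponent $\lfloor n/d\rfloor$ and prove both directions of the equivalence $d\mid r_p(k)\iff d\mid k$ for $d$ coprime to $p$, both of which the paper leaves implicit.
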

\begin{proof}For $\lambda \in X_p$, the denominator of $f_{\lambda}(x)$ is a polynomial of degree $n$ which factorises over $\Z$ as a product of cyclotomic polynomials. If $p$ divides $d$, then $\Phi_d(x)$ does not appear in this factorisation, whereas if $p$ does not divide $d$, then $\Phi_d(x)$ appears at most $\lfloor \frac{n}{k} \rfloor$ times in the common denominator. It follows that $$g_p(x) = \prod_{\substack{
      1 \le k \le n, \\
      (k,p)=1}} \Phi_k(x)^{\lfloor \frac{n}{k} \rfloor}.$$

Note that $\prod_{k \leq n} (1-x^{r_p(k)})$ is also a product of cyclotomic polynomials and $\Phi_d(x)$ appears if and only if $r_p(k)$ is a multiple of $d$. If $p$ divides $d$, then $r_p(k)$ is not a multiple of $d$; conversely, if $p$ does not divide $d$, then whenever $k$ is a multiple of $d$, $r_p(k)$ is a multiple of $d$. Since there are $\lfloor \frac{n}{d}\rfloor$ multiples of $d$ between $1$ and $n$, it follows that the $d^{\text{th}}$ cyclotomic polynomial appears $\lfloor \frac{n}{d}\rfloor$ times in $\prod_{k \leq n} (1-x^{r_p(k)})$. 
\end{proof}

\begin{proposition}The degree of $g_p(x)$ is at most $\frac{p}{2p+2}n^2 + n(\log_p n + 1)$. \end{proposition}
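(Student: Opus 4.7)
The plan is to read off the degree directly from Proposition 11 as $\delta_p = \sum_{k=1}^n r_p(k)$, and then bound this sum by grouping the summands according to the $p$-adic valuation of $k$. Specifically, every $k \leq n$ has a unique decomposition $k = p^a m$ with $\gcd(m,p) = 1$ and $m = r_p(k)$, where $a$ ranges over $\{0, 1, \ldots, \lfloor \log_p n \rfloor\}$ and, for each such $a$, the integer $m$ ranges over the elements of $[1, N_a]$ coprime to $p$, with $N_a := \lfloor n/p^a \rfloor$. This gives
$$\delta_p = \sum_{a=0}^{\lfloor \log_p n \rfloor} S_p(N_a), \qquad \text{where } S_p(N) := \sum_{\substack{1 \leq m \leq N \\ \gcd(m,p) = 1}} m.$$

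The next step is a clean estimate for $S_p(N)$. By inclusion--exclusion on multiples of $p$,
$$S_p(N) = \frac{N(N+1)}{2} - p \cdot \frac{\lfloor N/p \rfloor (\lfloor N/p \rfloor + 1)}{2};$$
using $\lfloor N/p \rfloor \geq N/p - 1$ and expanding, one obtains an upper bound of the shape $S_p(N) \leq \frac{(p-1)N^2}{2p} + cN$ for some small absolute constant $c$ (the constant can be tracked explicitly from the expansion, but only its size matters).

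Summing over $a$, the quadratic part contributes, via the geometric series,
$$\frac{p-1}{2p} \sum_{a \geq 0} N_a^2 \leq \frac{p-1}{2p} n^2 \sum_{a \geq 0} \frac{1}{p^{2a}} = \frac{p-1}{2p} n^2 \cdot \frac{p^2}{p^2 - 1} = \frac{p}{2(p+1)} n^2,$$
which is precisely the leading term $\frac{p}{2p+2}n^2$ in the statement. For the linear-in-$N_a$ part, the crude estimate $N_a \leq n$ applied at each of the at most $\lfloor \log_p n \rfloor + 1$ levels yields a contribution bounded by $n(\log_p n + 1)$.

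The only place that requires genuine care is the bookkeeping of the error: one could sharpen the per-level linear term geometrically, but that would give a bound stronger than needed. To land exactly on $n(\log_p n + 1)$ one instead absorbs the explicit constant $c$ and any boundary contributions into the $\log_p n + 1$ factor, treating the sum of linear terms with the trivial level-count estimate rather than geometrically. This step is the only non-mechanical ingredient; everything else is a direct decomposition together with the standard geometric-series sum $\sum_a p^{-2a} = p^2/(p^2-1)$.
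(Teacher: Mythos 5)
Your proposal is correct and follows essentially the same route as the paper: the grouping of $k=p^am$ by the $p$-adic valuation $a$ is exactly the paper's rewriting of $\delta_p$ as $\sum_{(k,p)=1,\,k\le n}k+\sum_{(k,p)=1,\,k\le n/p}k+\cdots$, the estimate $S_p(N)\le\frac{p-1}{2p}N^2+N$ (your constant $c$ is $1$) is the paper's stated bound, and both arguments finish with the geometric series $\sum_a p^{-2a}=p^2/(p^2-1)$ for the quadratic part and the trivial level count for the linear part. No substantive difference.
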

\begin{proof}
Let $m \in \N$ with $m \leq n$. The equation $r_p(k) = m$ has no solutions if $m$ is divisible by $p$ and $1 + \lfloor \log_p(\frac{n}{m}) \rfloor$ solutions if $p$ does not divide $m$, giving $$g_p(x) = \prod_{\substack{
      1 \le k \le n, \\
      (k,p)=1}} (1-x^k)^{1+\lfloor \log_p(\frac{n}{k}) \rfloor}.$$ 
Hence the degree of $g_p(x)$, namely $\delta_p$, is $$\sum_{\substack{
      1 \le k \le n, \\
      (k,p)=1}} k(1+\lfloor \log_p(\frac{n}{k}) \rfloor).$$
We can rewrite this as $$\delta_p = \sum_{\substack{
      1 \le k \le n, \\
      (k,p)=1}} k + \sum_{\substack{
      1 \le k \le \frac{n}{p}, \\
      (k,p)=1}} k + \sum_{\substack{
      1 \le k \le \frac{n}{p^2}, \\
      (k,p)=1}} k +\cdots .$$
It is easy to show that $$\sum_{\substack{
      1 \le m \le x, \\
      (m,p)=1}} m  \leq \frac{p-1}{2p}x^2 + x.$$
Therefore, we have $$\delta_p = \frac{p-1}{2p}(n^2 + \frac{n^2}{p^2} + \frac{n^2}{p^4} + \cdots) + n(\log_pn+1).$$  
We bound the sum in the brackets by the sum of the infinite geometric series, giving $\delta_p \leq \frac{p-1}{2p}\frac{n^2}{1-p^{-2}} + n(\log_pn+1).$
After some simple algebra, we see that $\delta_p \leq \frac{p}{2p+2}n^2 + n(\log_pn+1)$, as required.
\end{proof}

Recall that just before Definition 9, we established that $B(p,n) \leq \delta_p + 1 - n$, and so the following result is immediate:

\begin{corollary}For every prime number $p$ and $n \in \N$, we have $B(p,n) \leq \frac{p}{2p+2}n^2 + n\log_p n +1$. \end{corollary}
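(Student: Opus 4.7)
The plan is simply to chain together the two bounds that are already in hand. First, I would invoke the inequality $B(p,n) \leq \delta_p + 1 - n$ established in the paragraph preceding Definition 9; the point there is that once $g_p(x)$ is the least common denominator of the family $\{f_\lambda(x) : \lambda \in X_p\}$ and each $f_\lambda$ has numerator of degree at most $n-1$ smaller than $\delta_p$, the set $\{x^i/g_p(x) : 0 \leq i \leq \delta_p - n\}$ is a spanning set of size $\delta_p - n + 1$ for the relevant subspace of $\C(x)$. Second, I would plug in the bound from Proposition 10, namely $\delta_p \leq \frac{p}{2p+2}n^2 + n(\log_p n + 1)$.

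Substituting, one obtains
\[
B(p,n) \;\leq\; \delta_p + 1 - n \;\leq\; \frac{p}{2p+2}n^2 + n(\log_p n + 1) + 1 - n \;=\; \frac{p}{2p+2}n^2 + n\log_p n + 1,
\]
which is precisely the claimed bound; the $n$ term from $n(\log_p n + 1)$ cancels against the $-n$, leaving the stated expression. There is no real obstacle: the serious work is done in Lemma 7 (which reduces the problem to $p$-regular classes, hence to partitions in $X_p$), Proposition 8 (which identifies $g_p(x)$), and Proposition 10 (which bounds $\deg g_p(x)$). The corollary is the one-line arithmetic consequence, which is why the author flags it as immediate.
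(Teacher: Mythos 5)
Your proposal is correct and is exactly the argument the paper intends: combine the spanning-set bound $B(p,n) \leq \delta_p + 1 - n$ from the discussion preceding the definition of $r_p(k)$ with the degree bound $\delta_p \leq \frac{p}{2p+2}n^2 + n(\log_p n + 1)$, and observe that the $+n$ and $-n$ cancel. The only quibble is a harmless misnumbering of the cited results (the degree bound is Proposition 11 in the paper, not Proposition 10); the substance matches the paper's one-line deduction.
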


Hence we have obtained an upper bound on $B(p,n)$ which is asymptotic to $\frac{pn^2}{2p+2}$; this is always smaller than the $\frac{n^2}{2}$ obtained in the characteristic zero case in [1]. It is also worth noting that as $p$ becomes very large, this upper bound becomes close to the one obtained for characteristic zero. We now aim to find a lower bound on $B(p,n)$.

\begin{remark}It is, at first sight, natural to try to imitate the argument for a lower bound given in [1, Section 3] by considering the dimension of the vector space $\text{Span$_{\C}$} \lbrace e_\lambda(1,q,q^2,...) : \lambda \in X_p \rbrace$. However, it is not true that $\text{Span$_{\C}$} \lbrace u_\lambda(1,q,q^2,...) : \lambda \in X_p \rbrace = \text{Span$_{\C}$} \lbrace e_\lambda(1,q,q^2,...) : \lambda \in X_p \rbrace$ (if we instead allow $\lambda$ to be any partition, then it is true, and this is essential to the argument in [1]). While it is possible to modify the argument in [1] to prove that $\text{dim(Span$_{\C}$} \lbrace e_\lambda(1,q,q^2,...) : \lambda \in X_p \rbrace) \geq \frac{n^2}{2} - Kn^{\frac{3}{2}}$ for some constant $K$, which is of some independent interest, this does not help here.
Therefore, our strategy is to exhibit a large linearly independent subset of $\text{Span$_{\C}$}\lbrace f_\lambda : \lambda \in X_p \rbrace$.\end{remark}  

Given $n \in \N$ and a prime number $p$, we define $A_{p,n}$ to be the set of partitions of $n$ which have the form $(r^a, 1^b)$, where $r$ is coprime to $p$, and $a,b \in \N_0$.

\begin{proposition}We have $B(p,n) \geq \frac{p-1}{p} n\log n + n(\gamma - 1 - \frac{p-1}{p} - \frac{\gamma}{p})$, where $\gamma$ denotes the Euler--Mascheroni constant. \end{proposition}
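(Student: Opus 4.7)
The plan is to exhibit a linearly independent subfamily $\lbrace f_\lambda : \lambda \in A_{p,n} \rbrace$ of $\lbrace f_\lambda : \lambda \in X_p \rbrace$, which will yield $B(p,n) \geq |A_{p,n}|$, and then to estimate $|A_{p,n}|$ from below.

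For the linear independence I would adapt the pole-chasing argument of Proposition 4. For $\lambda = (r^a,1^b) \in A_{p,n}$ with $r \geq 2$, the rational function $f_\lambda(x) = 1/((1-x^r)^a(1-x)^b)$ has a pole of order exactly $a$ at $\omega_r := \exp(2\pi i / r)$, while $f_{(1^n)}(x) = 1/(1-x)^n$ has no pole there. Given a supposed non-trivial relation $\sum_\lambda \alpha_\lambda f_\lambda(x) = 0$, let $R \geq 2$ be maximal such that $\alpha_{(R^a,1^{n-Ra})} \neq 0$ for some $a \geq 1$, and let $A$ be maximal with $\alpha_{(R^A,1^{n-RA})} \neq 0$. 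At $\omega_R$, any term $(r^a,1^b)$ with $r = 1$ or $2 \leq r < R$ contributes no pole, since $R \nmid r$ forces $\omega_R^r \neq 1$; the terms $(R^a,1^{n-Ra})$ with $a \leq A$ contribute poles of order exactly $a$. Extracting the coefficient of $(x-\omega_R)^{-A}$ in the Laurent expansion therefore yields a non-zero scalar multiple of $\alpha_{(R^A,1^{n-RA})}$, forcing it to vanish and producing a contradiction. (The degenerate case in which only $(1^n)$ has $\alpha \neq 0$ is immediate.)

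For the count, the partitions in $A_{p,n}$ are $(1^n)$ together with $(r^a,1^{n-ra})$ for $2 \leq r \leq n$ coprime to $p$ and $1 \leq a \leq \lfloor n/r \rfloor$, so $|A_{p,n}| = 1 + \sum_{\substack{2 \leq r \leq n \\ (r,p)=1}} \lfloor n/r \rfloor$. I would then combine the bounds $\lfloor n/r \rfloor \geq n/r - 1$, the estimate $\sum_{\substack{1 \leq r \leq n \\ (r,p)=1}} 1/r = H_n - \frac{1}{p}H_{\lfloor n/p \rfloor} \geq \frac{p-1}{p} H_n$ (valid since $H_{\lfloor n/p \rfloor} \leq H_n$), the standard inequality $H_n \geq \log n + \gamma$, and $|\lbrace r : 2 \leq r \leq n, (r,p)=1 \rbrace| \leq \frac{p-1}{p} n$ (which follows from $\lfloor n/p \rfloor \geq n/p - 1$). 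Routine arithmetic then delivers the stated inequality, in fact with an extra additive constant to spare.

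The main obstacle is the linear-independence step. The delicate point is that for $\omega_R$ to isolate a single non-zero coefficient, we need both that maximality of $R$ rules out any partition whose large part is a proper multiple of $R$ (which would otherwise also contribute a pole at $\omega_R$), and that among the surviving partitions of the form $(R^a,1^{n-Ra})$ the choice of $A$ uniquely realises the leading Laurent coefficient. Once this is in hand, the counting portion is essentially elementary.
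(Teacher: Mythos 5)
Your proposal is correct and follows the same overall strategy as the paper: exhibit the family $\{f_\lambda : \lambda \in A_{p,n}\}$ as linearly independent, deduce $B(p,n) \geq |A_{p,n}|$, and then estimate $|A_{p,n}| = 1 + \sum_{2 \le r \le n,\, (r,p)=1} \lfloor n/r \rfloor$ from below; your counting step, using $H_{\lfloor n/p\rfloor} \le H_n$ and $H_n \ge \log n + \gamma$, lands exactly on the stated constant (with a spare $+1$), and is if anything slightly cleaner than the paper's, which needs a two-sided harmonic-number estimate from [3]. The one place you genuinely diverge is the mechanism for linear independence: you import the pole-chasing/Laurent-coefficient argument of Proposition 4, choosing $R$ maximal and then $A$ maximal and extracting the coefficient of $(x-\omega_R)^{-A}$, whereas the paper argues algebraically that a dependence would force a power $\Phi_R(x)^A$ of a cyclotomic polynomial to divide a polynomial it cannot, contradicting unique factorisation in $\Q[x]$. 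These are two renderings of the same fact --- the pole order of $f_\lambda$ at $\exp(2\pi i/R)$ is precisely the multiplicity of $\Phi_R$ in its denominator, and both proofs hinge on the same maximality choices to ensure that no other surviving partition has a part that is a multiple of $R$ --- so the analytic route buys nothing extra here, but it is complete and correct as you have set it up.
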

\begin{proof}
Suppose that, in $\Q(x)$, we have $f_\lambda(x) = \sum_{\mu \in M} \alpha_\mu f_{\mu}(x)$, where $M$ is a set of partitions such that the denominator of $f_\lambda(x)$ contains the cyclotomic factor $\Phi_a(x)^b$, but $\Phi_a(x)^b$ does not divide the denominator of any $f_{\mu}(x)$ for $\mu \in M$. Putting the right-hand side over a common denominator and multiplying both sides by $\frac{1}{f_{\lambda}(x)\Phi_a(x)^b}$, we obtain the relation $ \frac{1}{\Phi_a(x)^b} = \frac{F(x)}{G(x)},$
where $\Phi_a(x)^b$ does not divide $G(x)$. But this gives, in $\Q[x]$, the relation $F(x)\Phi_a(x)^b = G(x)$ which contradicts uniqueness of factorisation in $\Q[x]$. Hence the type of linear dependence mentioned above is impossible. It follows that the set $\lbrace f_\lambda(x) : \lambda \in A_{p,n} \rbrace$ is linearly independent since, by construction, the denominator of each $f_\lambda(x)$ for $\lambda \in A_{p,n}$ contains a power of a cyclotomic polynomial which does not divide any other denominator. Thus $B(p,n) \geq |A_{p,n}|$.

It is easy to see that $$|A_{p,n}| = 1 + \sum_{\substack{
      1 < r \le n, \\
      (p,r)=1}} \lfloor \frac{n}{r} \rfloor.$$ Therefore, we have $$|A_{p,n}| \geq 1 + \sum_{\substack{
      1 < d \le n, \\
      (p,d)=1}} (\frac{n}{d} - 1) = 1 + n(H_n - 1 - \frac{1}{p}H_{\lfloor \frac{n}{p} \rfloor} ).$$ 
It follows from [3, Theorem 1] that $$|A_{p,n}| \geq 1 + \frac{p-1}{p} n\log n + n(\gamma - 1 - \frac{\gamma}{p}).$$ This gives the required result.
\end{proof}

\subsection{Wreath Products}
We conclude by unifying our two extensions and considering the case of the Brauer characters of the generalised symmetric group. Let $B(p,n,k)$ denote the dimension of the space spanned by the Brauer characters of the symmetric powers of the natural representation of $C_k \wr S_n$ in characteristic $p$ and let $\zeta =\exp(\frac{2\pi i}{k})$. An element $\sigma \in C_k \wr S_n$ has order coprime to $p$ if and only if both $\lambda(\sigma) \in X_p$ and every $t_i$ has order coprime to $p$. Let $R_{p,n,k}$ be the subset of $C_k \wr S_n$ consisting of elements of order coprime to $p$.  The usual argument shows that $B(p,n,k) = \dim (\Span_\C \lbrace f_\sigma(x) : \sigma \in R_{p,n,k} \rbrace) $. 

\begin{proposition}We have $B(p,n,k) \leq r_p(k)(\frac{p}{2p+2}n^2 + n(\log_p n+1)) - n +1$. \end{proposition}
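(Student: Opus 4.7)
The plan is to construct a single common denominator for $\{f_\sigma(x) : \sigma \in R_{p,n,k}\}$ and then bound its degree using Proposition 11. Setting $q := r_p(k)$, I would take $g_p(x^q)$ as the common denominator. Once this is established, the standard spanning-set argument (as in Propositions 2 and 11) will give $B(p,n,k) \leq \deg g_p(x^q) - n + 1 = q\delta_p - n + 1$, and the bound $\delta_p \leq \frac{p}{2p+2}n^2 + n(\log_p n+1)$ from Proposition 11 combined with $q = r_p(k)$ will produce the stated inequality.

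The essential step is to verify that $g_p(x^q)$ really does absorb every $f_\sigma(x)$. Given $\sigma \in R_{p,n,k}$ with $\lambda(\sigma) = (\lambda_1,\ldots,\lambda_s)$ and associated tuple $(t_1,\ldots,t_s)$, the $p$-regularity condition forces $\lambda(\sigma) \in X_p$ and each $t_i$ to be a $q$-th root of unity. Writing $\omega$ for a primitive $q$-th root of unity, I would apply the identity $\prod_{j=0}^{q-1}(1-\omega^j y) = 1-y^q$ with $y = x^{\lambda_i}$ to conclude that each factor $(1-t_i x^{\lambda_i})$ of the denominator of $f_\sigma(x)$ divides $1 - x^{q\lambda_i}$ in $\C[x]$. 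Multiplying over $i$, the denominator of $f_\sigma(x)$ divides $\prod_i (1-x^{q\lambda_i})$, which is precisely $1/f_{\lambda(\sigma)}(x^q)$. Since $\lambda(\sigma) \in X_p$, Proposition 10 (read in the variable $y$) tells us that $1/f_{\lambda(\sigma)}(y) \mid g_p(y)$ in $\C[y]$; this divisibility is preserved under the substitution $y = x^q$, so the denominator of $f_\sigma(x)$ divides $g_p(x^q)$, as required.

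I expect no real obstacle here: the crucial realisation is that $p$-regularity pins the $t_i$ inside $\mu_q$, and this is exactly what lets the substitution $y = x^q$ reduce the wreath-product problem to the symmetric-group problem already solved in Proposition 10. After that, $\deg g_p(x^q) = q \delta_p$ by inspection, and Proposition 11's degree bound on $\delta_p$ closes the argument.
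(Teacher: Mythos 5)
Your proof is correct, but it takes a different route from the paper's. The paper starts from the large common denominator $D(x) = \prod_{i,j}(1-\zeta^i x^j)$ of Proposition 3 and then \emph{cancels} factors: it discards all $i$ with $p \mid \mathrm{ord}(\zeta^i)$, and for the remaining $r_p(k)$ values of $i$ it proves a small lemma (any root $\theta$ of $1/f_\sigma(x)$ with $\sigma \in R_{p,n,k}$ has order coprime to $p$) which allows it to strike out, from each $1-\zeta^i x^j$, the $j - r_p(j)$ linear factors whose roots have order divisible by $p$; counting what survives gives a common denominator of degree $r_p(k)\sum_{j\le n} r_p(j) = r_p(k)\delta_p$. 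You instead \emph{construct} the common denominator directly as $g_p(x^{q})$ with $q = r_p(k)$, observing that $p$-regularity forces each $t_i$ into $\mu_q$, so $(1-t_i x^{\lambda_i}) \mid (1-x^{q\lambda_i})$, and hence the denominator of $f_\sigma(x)$ divides $1/f_{\lambda(\sigma)}(x^q)$, which divides $g_p(x^q)$ by Proposition 10; this reduces the wreath-product case to the symmetric-group case by the substitution $y = x^q$ and gives the same degree $q\delta_p$. Your version is arguably cleaner: it reuses Proposition 10 wholesale, avoids the auxiliary lemma about orders of roots, and exhibits the common denominator as an explicit polynomial in $x^q$ rather than as the residue of a cancellation process; the paper's cancellation argument, on the other hand, makes transparent exactly which factors of $D(x)$ are being removed and why. (Minor point: the spanning-set step you invoke is the one in the proof of Proposition 3 and in the discussion preceding Definition 9, not Proposition 2 itself, but the argument is exactly as you describe since every $1/f_\sigma(x)$ has degree $n$.)
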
\begin{proof}Recall from the proof of Proposition 3 that a common denominator for the $f_\sigma(x)$ for $\sigma \in C_k \wr S_n$ is $$ D(x) = \prod_{\substack{
      0 \le i < k \\
      1 \le j \le n}} (1-\zeta^ix^j).$$ 
We say that we can cancel $p(x) \in \C[x]$ if $\frac{D(x)}{p(x)}$ is a common denominator for the $f_\sigma(x)$, where $\sigma \in R_{p,n,k}$. Our strategy is to cancel factors from $D(x)$. First, note that if $p$ divides the order of $\zeta^i$, then we may cancel $\prod_{j=1}^n (1-\zeta^ix^j)$ from $D(x)$ by definition of $R_{p,n,k}$. Fix $i$ such that $\zeta^i$ has order coprime to $p$. We need the following lemma.      
      
\begin{lemma}If $\sigma \in R_{p,n,k}$ and $x-\theta$ divides $\frac{1}{f_\sigma(x)}$ in $\C[x]$, then $\theta$ has order coprime to $p$. \end{lemma} 
\begin{proof}Suppose that  $x-\theta$ divides $\frac{1}{f_\sigma(x)}$ in $\C[x]$, then $\frac{1}{f_\sigma(\theta)} = 0$ and so $\prod_i(1-t_i\theta^{\lambda_i}) = 0$. Hence there exist $i,j$ such that $1-\zeta^i \theta^{\lambda_j} = 0$. Since $\sigma \in R_{p,n,k}$, $\zeta^i$ has order coprime to $p$ and $\lambda_j$ is coprime to $p$. But the order of $\theta$ divides $\text{ord}(\zeta^i)\lambda_j$, and so is coprime to $p$.
\end{proof}   
Therefore, in a factorisation of $\prod_{j=1}^n (1-\zeta^ix^j)$ into linear factors, we may cancel any factor $x-\theta$ where the order of $\theta$ is divisible by $p$. Consider the factor $1-\zeta^ix^j$, and write $j=p^er$ where $p$ does not divide $r$. The equation $1-\zeta^ix^j =0$ has $j$ roots, and exactly $r$ have order not divisible by $p$. It follows that we may cancel $j-r$ linear factors from the factorisation of $1-\zeta^ix^j$, leaving a term of degree $r = r_p(j)$, as defined in Definition 9.  

Consequently, cancelling factors from $\prod_{j=1}^n (1-\zeta^ix^j)$, replaces $1-\zeta^ix^j$ with a term of degree $r_p(j)$. This new polynomial has degree $\sum_{j=1}^n r_p(j)$, which is at most $\frac{p}{2p+2}n^2 + n(\log_p n + 1)$ by the proof of Proposition 11. We thus can put the power series $\lbrace f_\sigma(x) : \sigma \in R_{p,n,k} \rbrace$ over a common denominator of degree at most $r_p(k)(\frac{p}{2p+2}n^2 + n(\log_p n+1))$, giving the desired result.\end{proof} 
\subsection*{Acknowledgement} I am grateful to my advisor, Dr. Mark Wildon, for his many helpful comments and suggestions.
\section*{References}
\noindent [1] \textsc{David Savitt and Richard P. Stanley}, `A note on the symmetric powers of the standard representation of $S_n$', \textit{The Electronic Journal of Combinatorics} \textbf{7} (2000), R6

\noindent [2] \textsc{I. Martin Isaacs}, \textit{`Character Theory of Finite Groups'}, Dover Publications (1994)

\noindent [3] \textsc{Bai-Ni Guo and Feng Qi}, `Sharp bounds for harmonic numbers',
\textit{Applied Mathematics and Computation} 218 (2011), no. 3, 991–--995; Available online at http:
//dx.doi.org/10.1016/j.amc.2011.01.089

\noindent [4] \textsc{Gordon James and Adalbert Kerber}, \textit{`The Representation Theory of the Symmetric Group'}, Cambridge University Press (1984)

\end{document}